\newtheorem{theorem}{Theorem}[section]
\newtheorem{lemma}[theorem]{Lemma}
\theoremstyle{definition}
\theoremstyle{remark}
\newtheorem{remark}[theorem]{Remark}
\numberwithin{equation}{section}
\begin{document}

\title[Affine-geometric Wirtinger inequality]{Affine-geometric Wirtinger inequality}

\author[M. N. Ivaki]{Mohammad N. Ivaki}
\address{Department of Mathematics and Statistics,
  Concordia University, Montreal, QC, Canada, H3G 1M8}
\curraddr{}
\email{mivaki@mathstat.concordia.ca}
\thanks{}

\subjclass[2010]{53C44 (primary), 53A15 (secondary)}

\date{}

\dedicatory{}

\commby{Lei Ni}

\begin{abstract}
A generalization of the affine-geometric Wirtinger inequality for curves to hypersurfaces is given.
\end{abstract}

\maketitle
\section{Introduction}
\label{intro}
Suppose that $\mathcal{M}$  belongs to the set of smooth, strictly convex bodies in $\mathbb{R}^{n}$. Let $X:\partial{\mathcal{M}}\to\mathbb{R}^{n}$ be a smooth embedding of $(\partial{\mathcal{M}},g),$ the boundary of $\mathcal{M}$, with the induced Riemannian metric $g$. The affine normal vector is a vector field $\xi$ transverse to $\partial{\mathcal{M}}$ and invariant under volume-preserving affine transformations of $\mathbb{R}^n$.
The support function and the second fundamental form of $\partial{\mathcal{M}}$ are defined, respectively, by
$$s(z):=\langle z,X((-\nu)^{-1}(z))\rangle,~~~~h_{ij}=\hat{\nabla}^2_{ij}s+\hat{g}_{ij}s,$$
for each $z\in\mathbb{S}^{n-1},$
where $\hat{g}_{ij}$ is the standard metric on $\mathbb{S}^{n-1}$  and $\hat{\nabla}$ is the Levi-Civita connection induced by $\hat{g}$.
Here $\nu$ is the inward unit normal, thus $-\nu:\partial\mathcal{M}\to\mathbb{S}^{n-1}$ defines the Gauss map.
 We denote the Gauss curvature of $\partial{\mathcal{M}}$ by $\mathcal{K}$ and remark that, as a function on the unit sphere,  is related to the support function of the convex body by $$\frac{1}{\mathcal{K}}=\det_{\hat{g}}(\hat{\nabla}^2_{ij}s+\hat{g}_{ij}s):=\frac{\det h_{ij}}{\det{\hat{g}_{ij}}}.$$
 We need to recall also a few definitions from affine differential geometry, for an excellent and detailed exposition we point to the reference \cite{LT}.
  The affine metric of $\partial\mathcal{M}$ is defined by $\bar{g}_{ij}:=\frac{h_{ij}}{\mathcal{K}^{1/(n+1)}}$. We denote its induced Levi-Civita connection by $\bar{\nabla}$ and the induced volume measure by $\displaystyle d\bar{\mu}:=\sqrt{\bar{g}_{ij}}~dx^1\cdot\cdot \cdot  dx^{n-1}$, in a local coordinate chart $(x_1,...,x_{n-1})$. We also denote the affine curvature of $\partial{\mathcal{M}}$ by $A_{ij}$, where $A_{ij}$ is the second fundamental form of the affine metric with respect to $\bar{\nabla}$. The trace of $A_{ij}$ with respect to $\bar{g}_{ij}$, denoted by $H$, is called the affine mean curvature. In \cite{BA3}, Andrews proved the following inequality, called the affine-geometric Wirtinger inequality, for smooth, strictly convex curves $\gamma$ embedded in $\mathbb{R}^2$:
  \begin{equation*}
\int\limits_{\gamma}F^2 \eta ds\leq \frac{1}{2}\frac{\left(\int\limits_{\gamma}Fds\right)^2}{{\mbox{Area}}(\gamma)}
+\int\limits_{\gamma}F_{s}^2ds,
\end{equation*}
where $F:\gamma\rightarrow\mathbb{R}$ is any smooth function along the curve, $s$ is the affine arc-length of $\gamma$, $\eta$ is its affine curvature, and ${\mbox{Area}}(\gamma)$ is the area of the planar region enclosed by $\gamma.$\\
This inequality can be generalized to smooth, strictly convex hypersurfaces in $\mathbb{R}^n$:
\begin{equation*}
\int\limits_{\partial\mathcal{M}}F^2Hd\bar{\mu}\leq \frac{n-1}{n}\frac{\left(\int\limits_{\partial\mathcal{M}}Fd\bar{\mu}\right)^2}{\mbox{Vol}(\mathcal{M})}
+\int\limits_{\partial\mathcal{M}}\left|\bar{\nabla}F\right|^2_{\bar{g}}d\bar{\mu},
\end{equation*}
for any smooth function $F:\partial{M}\to\mathbb{R}$. Characterization of the equality case is also given.
Affine-geometric Wirtinger inequality is essentially a translation of Minkowski's mixed volume to the language of affine differential geometry. This will be made explicit in the next section.

For simplicity, we added the relevance of this inequality in affine geometric flows and in the $L_{-2}$ Minkowski's problem in the concluding remark at the end of the paper.

\section{Main results}
\begin{theorem}\label{thm: generalized} Suppose that $f$ is a smooth function defined on $(\partial\mathcal{M},g)$. Then the following inequality holds
\begin{equation}\label{e: generalized}
h^{ij}\left(\hat{\nabla}_{ij}^2f+\hat{g}_{ij}f\right)=
\bar{\Delta}\frac{f}{\mathcal{K}^{1/(n+1)}}+\frac{f}{\mathcal{K}^{1/(n+1)}}H,
\end{equation}
where $h^{ij}$ is the inverse of the second fundamental form $h_{ij}.$
\end{theorem}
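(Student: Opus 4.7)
Pull every object back to $\mathbb{S}^{n-1}$ via the Gauss map and set $\phi:=\mathcal{K}^{-1/(n+1)}$, so that $\bar g_{ij}=\phi h_{ij}$ and $\bar g^{ij}=\phi^{-1}h^{ij}$. The identity to be proved reads
$$h^{ij}\bigl(\hat\nabla^2_{ij}f+\hat g_{ij}f\bigr)=\bar\Delta(\phi f)+\phi f\,H.$$
Both sides are linear second-order operators in $f$, and their principal symbols agree, since the second-order part of $\bar\Delta(\phi f)$ is $\phi\bar g^{ij}\bar\nabla^2_{ij}f=h^{ij}\bar\nabla^2_{ij}f$, which differs from $h^{ij}\hat\nabla^2_{ij}f$ only by first-derivative (Christoffel) terms. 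So it suffices to match first- and zeroth-order terms in $f$.

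\textbf{Zeroth-order input.} Testing with $f=\phi^{-1}=\mathcal K^{1/(n+1)}$ gives $\phi f\equiv 1$ and $\bar\Delta(\phi f)=0$, so the identity collapses to
$$H=h^{ij}\bigl(\hat\nabla^2_{ij}\mathcal K^{1/(n+1)}+\hat g_{ij}\mathcal K^{1/(n+1)}\bigr),$$
the classical representation of the affine mean curvature in terms of the support function and Gauss curvature. I will take this formula as a known input and use it to absorb $\phi f\,H$ into the $h^{ij}\hat g_{ij}f$ piece of the LHS up to a zeroth-order correction that is verified separately.

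\textbf{Main computation.} Expand $\bar\Delta(\phi f)=\phi\bar\Delta f+f\bar\Delta\phi+2\bar g^{ij}\hat\nabla_i\phi\,\hat\nabla_jf$ by Leibniz and rewrite $\bar\Delta f$ via the Christoffel decomposition $\bar\Gamma^k_{ij}=\hat\Gamma^k_{ij}+S^k_{ij}$, where $S^k_{ij}$ is the sum of the $\hat\nabla\to\nabla^h$ correction $\tfrac12 h^{kl}(\hat\nabla_ih_{jl}+\hat\nabla_jh_{il}-\hat\nabla_lh_{ij})$ and the conformal correction $\tfrac12\phi^{-1}(\delta^k_i\phi_j+\delta^k_j\phi_i-h_{ij}h^{kl}\phi_l)$ coming from $\bar g=\phi h$. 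The crucial spherical-geometry input is the Codazzi symmetry $\hat\nabla_ih_{jk}=\hat\nabla_jh_{ik}=\hat\nabla_kh_{ij}$ for $h=\hat\nabla^2s+\hat g s$; it holds because the commutator $\hat\nabla^3_{ijk}s-\hat\nabla^3_{jik}s$ on the unit sphere equals exactly $\hat g_{ik}\hat\nabla_js-\hat g_{jk}\hat\nabla_is$, which cancels the contribution of the $\hat g s$ term in $\hat\nabla h$. Combined with the trace identity $h^{ij}\hat\nabla_lh_{ij}=(n+1)\phi^{-1}\hat\nabla_l\phi$ (a consequence of $\mathcal K^{-1}=\det h/\det\hat g$), this yields $h^{ij}S^k_{ij}=2\phi^{-1}h^{kl}\hat\nabla_l\phi$, which cancels the Leibniz cross term $2\bar g^{ij}\hat\nabla_i\phi\,\hat\nabla_jf$ and kills the first-order-in-$f$ mismatch. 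The zeroth-order match then reduces to $\bar\Delta\phi+\phi H=h^{ij}\hat g_{ij}$, which follows by substituting the $H$-formula, expanding $\hat\nabla^2\phi^{-1}$ by Leibniz, and re-applying the same $S^k_{ij}$ contraction to $\bar\Delta\phi$.

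\textbf{Main obstacle.} The crux of the argument is verifying the full symmetry of $\hat\nabla h$ on $\mathbb{S}^{n-1}$ via the explicit commutator computation above; once that identity is in hand, no further conceptual input is required, and the remainder is careful index bookkeeping that glues the two Christoffel correction tensors together with the $H$-formula to achieve the needed cancellations. That the cancellations must happen is morally guaranteed by the unimodular-affine invariance of both sides of the identity.
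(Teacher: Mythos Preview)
Your approach is correct and is genuinely different from the paper's. You pull everything back to $\mathbb{S}^{n-1}$, write $\bar g=\phi h$ with $\phi=\mathcal K^{-1/(n+1)}$, and verify the identity by direct Christoffel bookkeeping: the Codazzi symmetry of $\hat\nabla h$ together with $h^{ij}\hat\nabla_l h_{ij}=(n+1)\phi^{-1}\hat\nabla_l\phi$ gives the trace $h^{ij}S^k_{ij}=2\phi^{-1}h^{kl}\phi_l$, which indeed cancels the Leibniz cross term; the remaining zeroth-order identity $\bar\Delta\phi+\phi H=h^{ij}\hat g_{ij}$ then follows from the classical formula $H=h^{ij}(\hat\nabla^2_{ij}\mathcal K^{1/(n+1)}+\hat g_{ij}\mathcal K^{1/(n+1)})$ exactly as you indicate. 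The only caveat is that you must invoke that $H$-formula as an \emph{independent} input (e.g.\ from the structure equations in \cite{LT}), not as the special case $f=\phi^{-1}$ of the theorem itself, or the argument becomes circular; since this expression for the affine mean curvature in the Gauss-map parametrization is standard, this is legitimate.

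The paper instead proves the identity via a geometric flow: it evolves $\partial\mathcal M$ by the weighted affine normal flow $\partial_t X=F\xi$ with $F=f\mathcal K^{-1/(n+1)}$, computes $\partial_t\mathcal K$ separately in the Euclidean parametrization (obtaining $\mathcal K\bar\Delta F+\mathcal K FH+\langle\bar\nabla F,\bar\nabla\mathcal K\rangle_{\bar g}$) and in the Gauss-map parametrization (obtaining $\mathcal K\,h^{ij}(\hat\nabla^2_{ij}f+\hat g_{ij}f)$), and then relates the two time-derivatives by the change-of-parametrization lemma, which supplies exactly the extra $\langle\bar\nabla F,\bar\nabla\mathcal K\rangle_{\bar g}$ term. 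Your route is more elementary and self-contained at the level of tensor calculus, requiring no auxiliary evolution equations; the paper's route is longer but delivers a geometric interpretation---the identity is the equality of two expressions for the first variation of Gauss curvature, one referenced to the Euclidean normal and the other to the affine normal---and does not need to import the $H$-formula as a separate fact.
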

\begin{proof} This is a convenient identity which can be derived in several ways. We present here a proof of this identity based on a geometric flow. This approach gives an interesting geometric interpretation of this identity: The identity is the result of computing the instantaneous rate of change of the Gauss curvature under an arbitrary variation, where in the affine setting the computation depends on the component of the variation in the direction
of the affine normal vector, and in the Gauss map parametrization it depends on the component of the variation in the direction of the
Euclidean normal vector.

Without loss of generality, we can assume that $f$ is a positive function.  We define $F:=\frac{f}{\mathcal{K}^{1/(n+1)}}.$ Suppose that we evolve $\partial\mathcal{M}$ by the following weighted affine normal flow
$$\partial_tX(z,t)=F\xi(z,t).$$

 Short time existence of the flow follows from the standard theory of parabolic equations and this is solely what we will need.
 \begin{lemma}\label{lem: ev}
 Define $\nu$ to be the inward unit normal to $(\partial\mathcal{M},g)$. The following evolution equations hold for as long as the  weighted affine normal flow exists.
 \begin{enumerate}
   \item In the Euclidean parametrization ($\mathcal{E}$), we have $$\displaystyle\partial_t\nu=-\mathcal{K}^{1/(n+1)}g^{ij}\partial_i FX_{.,j},$$
   \item and $$\displaystyle\partial_t\mathcal{K}=\mathcal{K}\bar{\Delta}F+\mathcal{K}HF+\langle \bar{\nabla}F,\bar{\nabla}\mathcal{K}\rangle_{\bar{g}}.$$
   \item In the Gauss parametrization ($\mathcal{G}$), we have
    $$\displaystyle\partial_ts=-f,$$
    \item and $$\displaystyle\partial_t\mathcal{K}=\mathcal{K}h^{ij}\left(\hat{\nabla}_{ij}^2f+\hat{g}_{ij}f\right).$$
\end{enumerate}
\end{lemma}
The proof of the this lemma is entirely calculatory and we include it in the appendix.
\begin{lemma} Let $\varepsilon>0$ be a small number. Suppose that $Q:\partial\mathcal{M}^n\times [0, \varepsilon)\to \mathbb{R}$ and
$\bar{Q}:\mathbb{S}^{n-1}\times [0, \varepsilon)\to\mathbb{R}$
are smooth functions related by the following equation
$$\bar{Q}(-\nu(x,t),t)=Q(x,t).$$
Then the evolution equations of $Q$ and $\bar{Q}$ along the flow are related by
$$\partial_tQ=\partial_t\bar{Q}+\bar{g}^{ij}\partial_j\bar{Q}\partial_iF .$$
\end{lemma}
\begin{proof} Recall that $\partial_m\nu=-h_{mp}g^{pl}X_{.,l}$ and $\hat{g}^{im}g^{jn}h_{mn}=h^{ij}.$
\begin{align*}
\partial_tQ&=\partial_t\bar{Q}-\langle \hat{\nabla}\bar{Q}, \partial_t\nu\rangle\\
&=\partial_t\bar{Q}-\langle -\partial_k\bar{Q}\hat{g}^{km}\partial_m\nu, \partial_t\nu\rangle\\
&=\partial_t\bar{Q}+\langle \partial_k\bar{Q}\hat{g}^{km}\partial_m\nu, -\mathcal{K}^{1/(n+1)}g^{ij}\partial_i FX_{.,j}\rangle\\
&=\partial_t\bar{Q}- \partial_k\bar{Q}\partial_i F\hat{g}^{km}g^{ij}\mathcal{K}^{1/(n+1)}\langle\partial_m\nu, X_{.,j}\rangle\\
&=\partial_t\bar{Q}+ \partial_k\bar{Q}\partial_i F\hat{g}^{km}g^{ij}\mathcal{K}^{1/(n+1)}
\langle h_{mp}g^{pl}X_{.,l}, X_{.,j}\rangle\\
&=\partial_t\bar{Q}+ \partial_k\bar{Q}\partial_i F\mathcal{K}^{1/(n+1)}\hat{g}^{km}g^{ij}
h_{mp}g^{pl}g_{lj}\\
&=\partial_t\bar{Q}+ \partial_k\bar{Q}\partial_i F\mathcal{K}^{1/(n+1)}\hat{g}^{km}g^{ij}h_{mj}\\
&=\partial_t\bar{Q}+ \partial_k\bar{Q}\partial_i F\mathcal{K}^{1/(n+1)}h^{ki}\\
&=\partial_t\bar{Q}+ \bar{g}^{ij}\partial_j\bar{Q}\partial_i F.
\end{align*}
\end{proof}

To finish the proof  of Theorem \ref{thm: generalized}, we note that by the previous lemmas we have the following relation between the evolution equation of $\mathcal{K}$ in the Euclidean parametrization and in the Gauss parametrization:
$$\left(\partial_t\mathcal{K}\right)_{\mathcal{E}}
=\left(\partial_t\mathcal{K}\right)_{\mathcal{G}}+\langle \bar{\nabla}F,\bar{\nabla}\mathcal{K}\rangle_{\bar{g}}.$$
\end{proof}

Define $A[f]:=\hat{\nabla}_{ij}^2f+\hat{g}_{ij}f$, for any smooth function $f$ on $\mathbb{S}^{n-1}.$ The mixed curvature function of $n-1$ convex
bodies $K_1, K_2,\cdots, K_{n-1}$ with smooth support functions $s_1, s_2,\cdots, s_{n-1}$, denoted by
$Q[s_1, s_2,\cdots, s_{n-1}]$, is defined as follows
$$Q[s_1, s_2,\cdots, s_{n-1}]:=\frac{1}{(n-1)!}
\sum_{\sigma,\tau\in P_{n-1}}(-1)^{\mbox{sgn}(\sigma)+\mbox{sgn}(\tau)}A[s_1]_{\tau(1)}^{\sigma(1)}\cdots A[s_{n-1}]_{\tau(n-1)}^{\sigma(n-1)}.$$
Here $P_{n-1}$ is the group of permutations on $n-1$ objects.
Let $K$ be a convex body with smooth support function $s$. The mixed volume of $K, K_1,\cdots, K_{n-1}$, denoted by $V[K, K_1,\cdots, K_{n-1}]$ is defined as follows
$$V[K, K_1,\cdots, K_{n-1}]=\int\limits_{\mathbb{S}^{n-1}}sQ[s_1, s_2,\cdots, s_{n-1}]d\mu_{\mathbb{S}^{n-1}}.$$
Alternatively, we will write $V[s, s_1,\cdots, s_{n-1}]$ instead of $V[K, K_1,\cdots, K_{n-1}].$
By Minkowski's mixed volume inequality, we have
$$V[h,h,s_1,\cdots,s_{n-2}]\leq \frac{~V[s,h,s_1,\cdots,s_{n-2}]^2}{V[s,s,s_{1},\cdots,s_{n-2}]}$$
for any smooth function $h$ on $\mathbb{S}^{n-1}.$ Equality holds if and only if $h=cs+d$ for some constants $c, d\in\mathbb{R}.$ For more on the mixed curvature function, mixed volumes of convex bodies and the Minkowski inequality, we refer the reader to  \cite{schneider}.

\begin{remark}
Fix $s_1,\cdots,s_{n-2}$, then $Q[f]:= Q[f,s_1,\cdots, s_{n-2}]$ is a nondegenerate second-order linear elliptic operator on smooth functions on $\mathbb{S}^{n-1}$, expressed in local coordinates in the following form:
$$Q[f]=\sum_{i,j}\dot{Q}^{ij}\left(\hat{\nabla}_{ij}^2f+\hat{g}_{ij}f\right),$$
where $\dot{Q}$ is a positive definite matrix at each point of $\mathbb{S}^{n-1}$ which depends only
on the functions $s_1,\cdots,s_{n-2}$.
\end{remark}
\begin{lemma}\label{re} Let $\mathcal{M}$ be a smooth convex body with support function $s$. Then, for any smooth function $f:\partial\mathcal{M}\to\mathbb{R}$, we have
$$Q[f, \underbrace{s,\cdots,s}_{n-2~ \mbox{times}}]=\frac{1}{n-1}\frac{h^{ij}}{\mathcal{K}}\left(\hat{\nabla}_{ij}^2f+\hat{g}_{ij}f\right).$$
\end{lemma}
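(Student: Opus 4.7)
The plan is to exploit two elementary properties of the mixed curvature function $Q$: it is multilinear and totally symmetric in its $n-1$ arguments, and when all its arguments coincide it collapses to an ordinary determinant. The desired identity then falls out as the linear term of a polarization expansion combined with Jacobi's differentiation formula.

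First, I verify that $Q[u,u,\ldots,u]=\det(\hat g^{-1}A[u])$ for any smooth $u$ on $\mathbb{S}^{n-1}$. Substituting $s_i=u$ into the defining sum and making the change of variables $\pi=\sigma\tau^{-1}$ causes one permutation sum to produce a factor of $(n-1)!$, which cancels the prefactor, while the other reproduces the standard Leibniz formula for $\det(\hat g^{-1}A[u])$. In particular $Q[s,\ldots,s]=\det h/\det\hat g=1/\mathcal{K}$.

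Next, I note that the linearity of $A[\cdot]$ and the invariance of the defining sum under joint relabeling of arguments together with $\sigma$ and $\tau$ make $Q$ multilinear and symmetric. Hence, for a real parameter $t$,
$$Q[s+tf,\ldots,s+tf]=\sum_{k=0}^{n-1}\binom{n-1}{k}t^{k}\,Q[\underbrace{f,\ldots,f}_{k},\underbrace{s,\ldots,s}_{n-1-k}],$$
so the coefficient of $t^{1}$ is exactly $(n-1)\,Q[f,s,\ldots,s]$, the quantity I wish to compute.

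To compute this coefficient from the other side, I use the first step to rewrite the left-hand side as $\det(\hat g^{-1}A[s+tf])$ and apply Jacobi's formula:
$$\frac{d}{dt}\bigg|_{t=0}\det(\hat g^{-1}A[s+tf])=\det(\hat g^{-1}A[s])\,\mathrm{tr}\bigl((\hat g^{-1}A[s])^{-1}\hat g^{-1}A[f]\bigr).$$
The prefactor is $1/\mathcal{K}$. For the trace, observing that the mixed-index matrix $A[s]^{i}_{j}=\hat g^{ik}h_{kj}$ has inverse $(A[s]^{-1})^{i}_{j}=h^{ik}\hat g_{kj}$, where $h^{ij}$ denotes the standard tensorial inverse of $h_{ij}$, a brief index contraction reduces the trace to $h^{ij}A[f]_{ij}=h^{ij}(\hat\nabla^{2}_{ij}f+\hat g_{ij}f)$. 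Equating with $(n-1)\,Q[f,s,\ldots,s]$ yields the claim.

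The only genuine subtlety is the index bookkeeping in passing between the mixed-index object $A[s]^{i}_{j}$ (whose determinant is $1/\mathcal{K}$ and whose inverse enters the trace) and the tensorial inverse $h^{ij}$ that appears in the final formula; no deeper ingredient is required, since the argument is simply the polarization identity for mixed discriminants specialized to the matrices $A[s_i]$.
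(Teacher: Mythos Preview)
Your argument is correct. You identify $Q[u,\ldots,u]$ with $\det(\hat g^{-1}A[u])$, then extract $Q[f,s,\ldots,s]$ as the linear coefficient in the polarization expansion of $t\mapsto\det(\hat g^{-1}A[s+tf])$ via Jacobi's formula; the index manipulation reducing $\mathrm{tr}\bigl((\hat g^{-1}h)^{-1}\hat g^{-1}A[f]\bigr)$ to $h^{ij}A[f]_{ij}$ is clean and matches the paper's convention that $h^{ij}$ is the matrix inverse of $h_{ij}$. The paper instead proceeds by direct computation: it writes out the defining double sum for $Q[f,s,\ldots,s]$ and passes to a pointwise frame in which $h_{ij}$ (hence $A[s]^{i}_{j}$) is diagonal, so that the only nonvanishing terms are those with $\sigma=\tau$, and the sum collapses to $\frac{1}{n-1}\sum_i A[f]_{ii}\prod_{j\ne i}\lambda_j$. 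Your route is coordinate-free and makes the connection to the general mixed-discriminant formalism explicit, which is arguably more transparent; the paper's diagonalization is shorter to state but relies on choosing a special basis. Both are elementary and yield the identity immediately.
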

\begin{proof} By the definition of mixed curvature, we have
$$Q[f, s,\cdots, s]:=\frac{1}{(n-1)!}
\sum_{\sigma,\tau\in P_{n-1}}(-1)^{\mbox{sgn}(\sigma)+\mbox{sgn}(\tau)}A[f]_{\tau(1)}^{\sigma(1)}A[s]_{\tau(2)}^{\sigma(2)}\cdots A[s]_{\tau(n-1)}^{\sigma(n-1)}.$$
Now the claim can be easily verified by choosing appropriate coordinate patches in which $h_{ij}$ is a diagonal matrix.
\end{proof}

We are now ready to prove the affine-geometric Wirtinger inequality on smooth strictly convex hypersurfaces in $\mathbb{R}^n$.
\begin{theorem}(The affine-geometric Wirtinger inequality) Suppose that $F:\partial\mathcal{M}\rightarrow\mathbb{R}$ is a smooth function on $\mathcal{M}$, an arbitrary  smooth strictly convex hypersurface in $\mathbb{R}^n$. Then
\begin{equation}\label{ie: affine-Wirtinger inequality}
\int\limits_{\partial\mathcal{M}}F^2Hd\bar{\mu}\leq \frac{n-1}{n}\frac{\left(\int_{\partial\mathcal{M}}Fd\bar{\mu}\right)^2}{\mbox{Vol}(\mathcal{M})}
+\int\limits_{\partial\mathcal{M}}\left|\bar{\nabla}F\right|^2_{\bar{g}}d\bar{\mu}.
\end{equation}
Furthermore, with the earlier notations, equality holds if and only if  $\displaystyle F=\frac{1}{\mathcal{K}^{1/(n+1)}}(cs+d)$ for some constants $c, d\in\mathbb{R}.$
\end{theorem}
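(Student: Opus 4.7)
The plan is to reduce the inequality directly to Minkowski's mixed volume inequality, applied to the $(n-1)$-tuple in which one slot holds a chosen auxiliary function $f$ and the remaining $n-2$ slots hold the support function $s$ of $\mathcal{M}$. Theorem \ref{thm: generalized} and Lemma \ref{re} will serve as the bridge between the support-function side (where Minkowski lives) and the affine-geometric side (where the Wirtinger inequality is stated).

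The first step is the choice of $f$. Given $F$, I set $f := \mathcal{K}^{1/(n+1)} F$ and view it as a smooth function on $\mathbb{S}^{n-1}$ through the Gauss map. By Lemma \ref{re}, this makes $(n-1)\mathcal{K}\,Q[f,s,\ldots,s]$ equal to $h^{ij}(\hat{\nabla}^2_{ij}f+\hat{g}_{ij}f)$, and Theorem \ref{thm: generalized} rewrites the latter as $\bar{\Delta}F+FH$, precisely the operator appearing in the target inequality.

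The second step is to translate the three mixed volumes entering Minkowski's inequality into affine-geometric integrals. Using the standard relation $d\mu_{\mathbb{S}^{n-1}}=\mathcal{K}\,d\mu_g$ under the Gauss map, the identity $d\bar{\mu}=\mathcal{K}^{1/(n+1)}d\mu_g$ coming from the definition of the affine metric, the symmetry of the mixed volume, and the fact $Q[s,\ldots,s]=1/\mathcal{K}$, a straightforward rewrite yields
\begin{align*}
V[s,s,\ldots,s] &= n\,\mbox{Vol}(\mathcal{M}),\\
V[f,s,\ldots,s] &= \int_{\partial\mathcal{M}}F\,d\bar{\mu},\\
V[f,f,s,\ldots,s] &= \frac{1}{n-1}\int_{\partial\mathcal{M}}F\bigl(\bar{\Delta}F+FH\bigr)\,d\bar{\mu}.
\end{align*}
Integration by parts on the closed manifold $(\partial\mathcal{M},\bar{g})$ turns the last line into $\tfrac{1}{n-1}\int_{\partial\mathcal{M}}\bigl(F^2H-|\bar{\nabla}F|^2_{\bar{g}}\bigr)\,d\bar{\mu}$.

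Plugging these identities into Minkowski's inequality $V[f,f,s,\ldots,s]\,V[s,s,\ldots,s]\le V[f,s,\ldots,s]^2$ and rearranging produces exactly \eqref{ie: affine-Wirtinger inequality}. The rigidity clause transfers verbatim from Minkowski: equality forces $f=cs+d$ on $\mathbb{S}^{n-1}$, i.e.\ $F=(cs+d)/\mathcal{K}^{1/(n+1)}$. There is no deep analytic obstacle; the points requiring care are the coefficient $1/((n-1)\mathcal{K})$ in Lemma \ref{re}, the full symmetry of the mixed volume (needed to swap the ``integrated'' slot of $V$ with an ``inner'' slot of $Q$), and the precise exponent $\mathcal{K}^{1/(n+1)}$ in the passage between $d\mu_g$ and $d\bar{\mu}$. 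Once those identifications are locked down, the entire argument amounts to substitution into Minkowski.
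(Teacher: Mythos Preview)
Your proposal is correct and follows essentially the same route as the paper: define $f=\mathcal{K}^{1/(n+1)}F$, combine Lemma~\ref{re} with Theorem~\ref{thm: generalized} to express $V[f,f,s,\ldots,s]$ as $\tfrac{1}{n-1}\int_{\partial\mathcal{M}}(F^2H-|\bar{\nabla}F|^2_{\bar{g}})\,d\bar{\mu}$ after integration by parts, identify $V[f,s,\ldots,s]=\int_{\partial\mathcal{M}}F\,d\bar{\mu}$ and $V[s,\ldots,s]=n\,\mbox{Vol}(\mathcal{M})$, and then invoke Minkowski's mixed volume inequality together with its equality case. The only additional detail you supply beyond the paper is the explicit bookkeeping for the measure changes $d\mu_{\mathbb{S}^{n-1}}=\mathcal{K}\,d\mu_g$ and $d\bar{\mu}=\mathcal{K}^{1/(n+1)}d\mu_g$, which is helpful but not a different idea.
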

\begin{proof} Define $f:=F\mathcal{K}^{1/(n+1)}$, hence using Lemma \ref{re} and Theorem \ref{thm: generalized}, we have
\begin{align*}
V\left[f,f,s,\cdots,s\right]&=\int\limits_{\mathbb{S}^{n-1}}fQ[f, s,\cdots, s]d\mu_{\mathbb{S}^{n-1}}\\
&=\int\limits_{\partial\mathcal{M}}\frac{f}{\mathcal{K}^{1/(n+1)}}Q[f, s,\cdots, s]\mathcal{K}d\bar{\mu}\\
&=\frac{1}{n-1}\int\limits_{\partial\mathcal{M}}\frac{f}{\mathcal{K}^{1/(n+1)}}\left(\bar{\Delta}\frac{f}{\mathcal{K}^{1/(n+1)}}
+\frac{f}{\mathcal{K}^{1/(n+1)}}H\right)d\bar{\mu}\\
&=\frac{1}{n-1}\int\limits_{\partial\mathcal{M}}\left(-\left|\bar{\nabla}F\right|_{\bar{g}}^2+F^2H\right)
d\bar{\mu}.
\end{align*}
 We used integration by parts on the last line. Finally, by Minkowski's mixed volume inequality, and the identities $$V[f,s,\cdots,s]=\int\limits_{\partial\mathcal{M}}Fd\bar{\mu}\ \ \ {\mbox{and}}\ \ \
 V[s,s,\cdots,s]=n\mbox{Vol}(\mathcal{M}),$$
 we obtain the desired inequality
(\ref{ie: affine-Wirtinger inequality}). The equality condition of our theorem comes from the equality condition of Minkowski's mixed volume inequality.
\end{proof}

\begin{remark} We would like to state few remarks on the importance of the affine-geometric Wirtinger inequality.
\begin{enumerate}
  \item Using affine Wirtinger inequality one can prove that $p$-affine isoperimetric ratio, $\displaystyle\frac{\int\limits_{\mathbb{S}^{n-1}}\frac{s}{\mathcal{K}}\left(\frac{\mathcal{K}}{s^{n+1}}\right)^{\frac{p}{n+p}}
  d\mu_{\mathbb{S}^{n-1}}}{Vol^{\frac{n-p}{n+p}}}$, is increasing along the $p$-centro affine normal flow
$$\partial_ts=-s\left(\frac{\mathcal{K}}{s^{n+1}}\right)^{\frac{p}{p+n}},~ p\geq 1$$
starting the flow from any smooth, origin symmetric convex hypersurface in $\mathbb{R}^{n}.$ This flow is first introduced by Stancu \cite{S} for the purpose of finding new affine invariant quantities in centro-affine convex geometry.
The long time behavior of the flow  in $\mathbb{R}^{2}$ was studied in \cite{Ivaki}. Using monotonicity of $p$-affine isoperimetric ratio, it was shown that this flow evolves any smooth, origin symmetric convex curve to an ellipsoid in Banach Mazur distance.
The long time behavior of $p$-centro affine normal flows remains open in higher dimensions.
  \item By means of affine Wirtinger inequality one can prove that the weighted $p$-affine isoperimetric ratio, $\displaystyle\frac{\int\limits_{\mathbb{S}^{n-1}}\Phi\frac{s}{\mathcal{K}}\left(\frac{\mathcal{K}}{s^{n+1}}\right)^{\frac{p}{n+p}}
  d\mu_{\mathbb{S}^{n-1}}}{Vol^{\frac{n-p}{n+p}}}$, is increasing along the weighted $p$-centro affine normal flow
$$\partial_ts=-\Phi s\left(\frac{\mathcal{K}}{s^{n+1}}\right)^{\frac{p}{p+n}},~ p\geq 1$$
starting the flow from any smooth, origin symmetric convex hypersurface in $\mathbb{R}^{n}.$ Here $\Phi$ is an even, positive smooth function on $\mathbb{S}^{n-1}.$
Note that in general the solution to the normalized weighted $p$-centro affine normal flow is not convergent as $\Phi$ must satisfy some obstructions.
For example in $\mathbb{R}^2$ it was proved in \cite{Ivaki2} that $\Psi$ must have at least eight critical points. Using this flow it was shown  therein that for any given even smooth positive function on $\mathbb{S}^1$ there is a smooth, origin symmetric convex curve with support function $s$ that \emph{almost} solves
$$s^3(s_{\theta\theta}+s)=\Psi.$$
More precisely, given an $\varepsilon>0$ there is a smooth, origin symmetric convex curve $\gamma$ such that
$$||s^3(s_{\theta\theta}+s)-\Psi||<\varepsilon.$$
Let $\hat{\nabla}$ denotes the standard Levi-Civita connection of $\mathbb{S}^{n-1}$.  Finding necessary and sufficient condition on $\Psi\in C^{\infty}(\mathbb{S}^{n-1})$ to guarantee the existence of a convex body with support function $s$ that solves
$$s^{n+1}\det\left(\hat\nabla^2_{ij}+\delta_{ij}s\right)=\Psi$$
is an important problem in centro-affine differential geometry, known as the $L_{-n}$ Minkowski problem.
See \cite{Chou1,Chen,Chou,Jiang,U} for several important results related to this problem.
  \item  It was pointed out to me by Ben Andrews that it may be possible to extract an inequality for the affine mean curvature from the affine
 Wirtinger inequality which is similar to Michael-Simon Sobolev inequality for the mean curvature, a core tool
 in  minimal surface theory.
\end{enumerate}
\end{remark}
\section{Appendix}
We need the following structure equations
\begin{enumerate}
\item $\displaystyle\xi=-h^{ki}\partial_i\mathcal{K}^{1/(n+1)}X_{.,k}+\mathcal{K}^{1/(n+1)}\nu,$
  \item$\displaystyle\xi_i=-A_i^kX_{.,k},$
  \item $\displaystyle\partial^2_{ij}X=\bar{g}_{ij}+C_{ij}^{k}X_{.,k},$ where $C_{ij}^{k}$
is the cubic form with zero traces.
\cite{LT}.
 \end{enumerate}
 To prove Lemma \ref{lem: ev}, we will use the structure equations without specific mention.
\begin{proof}
We start by proving (1) and (2).
\begin{align*}
\partial_tX_{.,i}=\partial_{i}(F\xi)=\xi\partial_{i}F+F\partial_{i}\xi
=\xi\partial_{i}F-FA_i^kX_{.,k}.
\end{align*}
\begin{align*}
\partial_t \nu&=\langle \partial_t \nu, X_{.,i}\rangle g^{ij}X_{.,j}\\
&=-\langle \nu,\partial_t  X_{.,i}\rangle g^{ij}X_{.,j}\\
&=-\langle\nu, \xi\partial_{i}F-FA_i^kX_{.,k} \rangle g^{ij}X_{.,j}\\
&=-\langle\nu,\xi\rangle g^{ij}\partial_{i}FX_{.,j}\\
&=-\mathcal{K}^{1/(n+1)}g^{ij}\partial_{i}FX_{.,j}.
\end{align*}
\begin{align*}
\partial_tg_{ij}&=\partial_t\langle X_{.,i}, X_{.,j}\rangle\\
&=\langle \partial_{ti}^2X,X_{.,j}\rangle+\langle X_{.,i}, \partial_{tj}^2X \rangle\\
&=\langle \xi\partial_iF-FA_i^kX_{.,k},X_{.,j}\rangle+\langle X_{.,i},\xi\partial_jF-FA_j^kX_{.,k}\rangle\\
&=-FA_i^kg_{jk}-FA_j^kg_{ik}-h^{kl}g_{kj}\partial_l\mathcal{K}^{1/(n+1)}\partial_iF-h^{kl}g_{ki}\partial_l\mathcal{K}^{1/(n+1)}\partial_jF.
\end{align*}
\begin{align*}
\partial_t \det g_{ij}&=(\det g_{ij})g^{ij}\partial_tg_{ij}\\
&=\left(-2HF\det g_{ij}-2\bar{g}^{ij}\partial_i \ln \mathcal{K}^{1/(n+1)}\partial_j F\right)\det g_{ij}\\
&=\left(-2HF\det g_{ij}-2\langle\bar{\nabla}\ln \mathcal{K}^{1/(n+1)},\bar{\nabla}F\rangle_{\bar{g}}\right)\det g_{ij}.
\end{align*}
For the next computation $\bar{\Gamma}_{ij}^{k}$ are the Christoffel symbols related to the affine metric $\bar{g}_{ij}.$
\begin{align*}
\partial_t h_{ij}&=\partial_t\langle \partial_{ij}^2X,\nu\rangle\\
&=\langle\partial^3_{tij}X,\nu\rangle+\langle \partial^2_{ij}X,\partial_t\nu\rangle\\
&=\langle \partial^2_{ij}(F\xi),\nu\rangle+\langle\partial^2_{ij}X,-\mathcal{K}^{1/(n+1)}g^{kl}\partial_kFX_{.,l}\rangle\\
&=\langle \xi\partial^2_{ij}F+F\partial^2_{ij}\xi + \partial_iF\partial_j\xi+\partial_jF\partial_i\xi,\nu \rangle
+\langle \partial^2_{ij}X,-\mathcal{K}^{1/(n+1)}g^{kl}\partial_kFX_{.,l}\rangle\\
&=\langle (\partial^2_{ij}F-\bar{\Gamma}_{ij}^{k}\partial_{k}F)\xi+\bar{\Gamma}_{ij}^{k}\xi\partial_kF,\nu\rangle+
\langle\partial^2_{ij}X-\bar{\Gamma}_{ij}^{k}X_{.,k}+\bar{\Gamma}_{ij}^{k}X_{.,k},-\mathcal{K}^{1/(n+1)}g^{kl}\partial_kFX_{.,l} \rangle\\
&+\langle F\partial_{ij}^2\xi,\nu\rangle\\
&=\mathcal{K}^{1/(n+1)}\bar{\nabla}^{2}_{ij}F+\langle \bar{g}_{ij}\xi+C_{ij}^kX_{.,k},-\mathcal{K}^{1/(n+1)}g^{ml}\partial_mFX_{.,l}\rangle
+\langle F\partial_{ij}^2\xi,\nu\rangle\\
&=\mathcal{K}^{1/(n+1)}\bar{\nabla}^{2}_{ij}F+\bar{g}_{ij}\mathcal{K}^{1/(n+1)}h^{kl}\partial_kF\partial_l\mathcal{K}^{1/(n+1)}-
\mathcal{K}^{1/(n+1)}C_{ij}^n\partial_nF+\langle F\partial_{ij}^2\xi,\nu\rangle\\
&=\mathcal{K}^{1/(n+1)}\bar{\nabla}^{2}_{ij}F+\bar{g}_{ij}\mathcal{K}^{1/(n+1)}h^{kl}\partial_kF\partial_l\mathcal{K}^{1/(n+1)}-
\mathcal{K}^{1/(n+1)}C_{ij}^n\partial_nF+\langle F\partial_i(-A_j^kX_{.,k}),\nu\rangle\\
&=\mathcal{K}^{1/(n+1)}\bar{\nabla}^{2}_{ij}F+\bar{g}_{ij}\mathcal{K}^{1/(n+1)}h^{kl}\partial_kF\partial_l\mathcal{K}^{1/(n+1)}-
\mathcal{K}^{1/(n+1)}C_{ij}^n\partial_nF-\langle FA_j^k\partial^2_{ik}X,\nu\rangle\\
&=\mathcal{K}^{1/(n+1)}\bar{\nabla}^{2}_{ij}F+\bar{g}_{ij}\mathcal{K}^{1/(n+1)}h^{kl}\partial_kF\partial_l\mathcal{K}^{1/(n+1)}-
\mathcal{K}^{1/(n+1)}C_{ij}^n\partial_nF-FA_j^kh_{ik}.
\end{align*}
\begin{align*}
\partial_t\det h_{ij}&=(\det h_{ij})h^{ij}\partial_t h_{ij}\\
&=\left(\bar{\Delta} F+(n-1)\langle \bar{\nabla}\ln\mathcal{K}^{1/(n+1)},\bar{\nabla} F\rangle_{\bar{g}}-FH\right)\det h_{ij}.
\end{align*}
\begin{align*}
\partial_t\mathcal{K}&=\partial_t\left( \frac{\det h_{ij}}{\det g_{ij}}\right)\\
&=2FH\mathcal{K}+2\langle \bar{\nabla}\ln\mathcal{K}^{1/(n+1)},\bar{\nabla} F\rangle_{\bar{g}}\mathcal{K}+\mathcal{K}\bar{\Delta} F+(n-1)\mathcal{K}\langle \bar{\nabla}\ln\mathcal{K}^{1/(n+1)},\bar{\nabla} F\rangle_{\bar{g}}-FH\mathcal{K}\\
&=\mathcal{K}\bar{\Delta} F+\mathcal{K}FH+\langle \bar{\nabla}\mathcal{K},\bar{\nabla} F\rangle_{\bar{g}}.
\end{align*}
Now we proceed to prove (3) and (4).

\begin{align*}
\partial_t s(z)&=\partial_t\langle z, X((-\nu)^{-1}(z),t)\rangle\\
&=\langle z, \nabla X.\partial_t(-\nu)^{-1}+\partial_tX\rangle\\
&=\langle z, F\xi\rangle\\
&=\langle -\nu, F\xi\rangle\\
&=-f.
\end{align*}
\begin{align*}
\partial_t \mathcal{K}&=\partial_t \left(\frac{\det \hat{g}_{ij}}{\det h_{ij}}\right)\\
&=-\left(\frac{\det \hat{g}_{ij}}{\det^2 h_{ij}}\right)\partial_t\det h_{ij}\\
&=-\left(\frac{\det \hat{g}_{ij}}{\det^2 h_{ij}}\right)(\det h_{ij})h^{ij}\partial_t\left(\hat{\nabla}^2_{ij}s+\hat{g}_{ij}s\right)\\
&=\left(\frac{\det \hat{g}_{ij}}{\det h_{ij}}\right)h^{ij}\left(\hat{\nabla}^2_{ij}f+\hat{g}_{ij}f\right)\\
&=\mathcal{K}h^{ij}\left(\hat{\nabla}^2_{ij}f+\hat{g}_{ij}f\right).
\end{align*}
\end{proof}
\textbf{Acknowledgements}:
I would like to thank Alina Stancu, for her comments, suggestions and encouragements.

\bibliographystyle{amsplain}

\end{document}